\DeclareMathAlphabet\mathbfcal{OMS}{cmsy}{b}{n}
\newtheorem{theorem}{Theorem}
\theoremstyle{plain}
\newtheorem{corollary}{Corollary}
\newtheorem{definition}{Definition}
\newtheorem{example}{Example}
\newtheorem*{proposition*}{Proposition}
\newcommand{\Hom}{{\mbox{Hom}}}
\newcommand{\vol}{{\mbox{vol}}}
\newtheorem*{observation*}{Observation}
\newtheorem*{theorem*}{Theorem}
\newtheorem*{claim*}{Claim}  
\title{Geometric structures as variational objects, I}
\author{Gabriella Clemente}
\date{}
\renewcommand\tableofcontents{%
    \@starttoc{toc}%
}
\begin{document}
\maketitle

\begin{abstract}
This note provides a variational description of the most basic differential geometric structures on a smooth manifold.
\end{abstract}

\section*{Introduction}
Symplectic forms, complex structures, and K{\"a}hler complex structures on a manifold $M$ share the common trait of being smooth sections of a vector bundle on $M,$ satisfying a homogeneous PDE system in their coefficients. They are colloquially referred to as differential geometric structures on $M.$ This note formulates in a precise way the concept of what might be the simplest geometric structures, and shows that they can always be concretely realized via functionals. The main observation is that these simple geometric structures are variational objects (see Theorem \ref{T1}), a statement that can be understood intuitively at this point, but which will acquire a crystalized meaning in the next pages. Symplectic manifolds, Riemannian manifolds with exceptional holonomy, and Calabi-Yau $3$-folds have been defined via functionals in \cite{Hitchin}. 

The intention is to continue the study of these geometric structures in forthcoming work that will cover more intricate examples, such as the $\alpha$-integrable almost-complex structures from \cite{CSACPS}. The long-term objective is to reverse engineer Theorem \ref{T1}, and its future generalizations, to arrive at a ``motivic architecture" of sorts that encapsulates those aspects of existence problems about geometric structures that are systematic. In other words, this architecture should be a set of principles that make up the core of variational phenomena in the realm of geometric structures. The notions of geometric flows and stability (in the sense of K-stability, and see \cite{CSACPS} for a formal, almost-complex analog) are candidate idea-pillars of this mindset. 

The first section of this note sets up the machinery for the calculus of variations of vector bundle valued differential forms that is used to write down the functional that characterizes geometric structures, and which is the theme of section 2. The conclusion attempts to discuss geometric structures from a more abstract point of view.

\section{Bundle forms and geometric structures}
Let $M$ be a smooth manifold of real dimension $n,$ and $E \to M$ be a real vector bundle of rank $m.$ Consider the space of $E$-valued differential forms on $M,$ \[\Omega^{\bullet} (M, E)=\bigoplus_{k=0}^n \Omega^k (M, E).\] For $E=M \times \mathbb{R}$ the trivial line bundle, the $E$-valued forms coincide with the usual differential forms $\Omega^{\bullet} (M)=\bigoplus_{k=0}^n \Omega^k (M).$ An element $\gamma \in \Omega^{\bullet} (M, E)$ decomposes as a sum $\gamma=\sum_{k=0}^n \gamma_k.$ 

\subsection{Bundle form calculus}
Let $h_E$ be a metric on $E.$ Then, $\Omega^{\bullet} (M, E)$ can be endowed with an $L^2$-inner product as explained in the sequel. For any $k,l$ with $0 \leq k+l \leq n,$ consider the bilinear map \[\wedge_{h_E}:\Omega^k (M, E) \otimes \Omega^l (M, E) \to  \Omega^{k+l} (M),\] defined on pure elements as $(a \otimes v) \wedge_{h_E} (b \otimes w)=a \wedge b h_E (v,w).$ In a local frame $(\epsilon_i)_{i=1}^m$ of $E,$ where $\alpha=\sum_{i=1}^m a_i \otimes \epsilon_i \in \Omega^k (M, E),$ $\beta=\sum_{j=1}^m b_j \otimes \epsilon_j \in \Omega^l (M, E),$ \[\alpha \wedge_{h_E} \beta=\sum_{i,j=1}^m a_i \wedge b_j h_E (\epsilon_i,\epsilon_j).\] The Riemannian metric induces an inner product on exterior powers of $T^*_M,$ which when combined with the bundle metric on $E,$ gives an inner product \[\langle \cdot, \cdot \rangle_{h_E}:  \Omega^k (M, E) \otimes \Omega^k (M, E) \to \mathbb{R}, \quad \langle a \otimes v, a' \otimes v' \rangle_{h_E}=\langle a,a'\rangle h_E(v,v').\] Locally, if $\alpha=\sum_{i=1}^m a_i \otimes \epsilon_i, \alpha'=\sum_{j=1}^m a'_j \otimes \epsilon_j \in \Omega^k (M, E),$ then \[\langle \alpha, \alpha' \rangle_{h_E}=\sum_{i,j=1}^m \langle a_i,a'_j\rangle h_E(\epsilon_i,\epsilon_j).\] Thanks to the metric on $E,$ the usual Hodge star operator on $\Omega^{\bullet} (M)$ extends to an operator $\star_{h_E}:\Omega^{\bullet} (M, E) \to \Omega^{\bullet} (M, E)$ with the defining property that for any $\alpha, \alpha' \in \Omega^k (M, E),$ \[\alpha \wedge_{h_E} \star_{h_E} \alpha'=\langle \alpha, \alpha'\rangle_{h_E} \vol_g,\] where $\vol_g$ is the Riemannian volume form of $(M,g).$ The notation $\langle \cdot, \cdot \rangle_{h_E}$ is being reserved for the inner product on $\Omega^k (M, E)$ only, and it should be distinguished from $h_E (\cdot, \cdot),$ which is the metric inner product on $E.$ 

For each $k,$ there is a pairing \[\langle \cdot, \cdot\rangle^{h_E}_k:\Omega^k (M, E) \otimes \Omega^k (M, E) \to \mathbb{R}, \quad \langle \alpha, \beta\rangle^{h_E}_k=\int_M \alpha \wedge_{h_E} \star_{h_E} \beta,\] and this gives the $L^2$-inner product \[\langle \langle \cdot, \cdot \rangle \rangle_{h_E}:\Omega^{\bullet} (M, E) \otimes \Omega^{\bullet} (M, E) \to \mathbb{R}, \quad \langle \langle A, B \rangle\rangle_{h_E}:=\sum_{k=0}^n \langle A_k,B_k\rangle^{h_E}_k,\] where $A=\sum_{k=0}^n A_k, B=\sum_{k=0}^n B_k,$ and $A_k, B_k \in \Omega^k (M, E).$ If $E=T_M$ with $h_E=g,$ then \[\langle \langle \cdot, \cdot \rangle \rangle_{g}:\Omega^{\bullet} (M, T_M) \otimes \Omega^{\bullet} (M, T_M) \to \mathbb{R}, \quad \langle \langle A, B \rangle\rangle_{g}=\sum_{k=0}^n \langle A_k,B_k\rangle^{g}_k\] is the $L^2$-inner product from \cite{ACOTI}.

Observe that $g$ induces a metric on any vector bundle $\mathcal{E}(T_M)$ built from $T_M, T^*_M,$ and the resulting inner product on $\Omega^{\bullet} \big(M, \mathcal{E}(T_M)\big)$ will be denoted too by $\langle \langle \cdot, \cdot \rangle \rangle_{g}.$ For instance, consider the case $E=\bigwedge^{\bullet} T_M,$ and the inner product on $\Omega^{\bullet} \Big(M, \bigwedge^{\bullet} T_M \Big)$ that is discussed in \cite{CSACPS}. 

The notation for the usual Hodge pairing of $k$-forms used here is $\langle \langle \cdot, \cdot \rangle \rangle_k,$ where recall $\langle\langle a,b \rangle\rangle_k=\int_M a \wedge \star b=\int_M \langle a,b\rangle \vol_g.$ It gives an $L^2$-inner product \[\langle \langle \cdot, \cdot \rangle \rangle:\Omega^{\bullet} (M) \otimes \Omega^{\bullet} (M) \to \mathbb{R}, \quad \langle \langle A, B \rangle\rangle=\sum_{k=0}^n \langle\langle A_k,B_k\rangle\rangle_k,\] where $A_k, B_k \in \Omega^k (M),$ that is consistent with all of the above.

Now, let $\nabla^E$ be a linear connection on $E$; i.e.\ $\Omega^0(M,E) \to \Omega^1(M,E)$ is a linear map that satisfies the Leibniz rule: for any smooth function $\sigma \in C^{\infty}(M, \mathbb{R})$ on $M,$ and any smooth section $s \in \Omega^0(M,E)$ of $E,$ $\nabla^E (\sigma s)=d\sigma \otimes s+f \nabla^E s.$

The exterior covariant derivative $d^{\nabla^E}$ associatd with $\nabla^E$ is the degree $1$ -- $d^{\nabla^E}:\Omega^k(M,E) \to \Omega^{k+1}(M,E)$ -- operator on $E$-valued forms, which on $k$-forms $\alpha \in \Omega^k(M,E),$ acts as 

\begin{equation*}
\begin{split}
(d^{\nabla^E} \alpha)(\zeta_0,\dots,\zeta_k)&=\sum_{i=0}^k (-1)^i {\nabla^{E}}_{\zeta_i} \alpha(\zeta_0,\dots,\widehat{\zeta_i},\dots,\zeta_k)+\\
&\sum_{0 \leq i <j \leq k} (-1)^{i+j} \alpha ([\zeta_i,\zeta_j],\dots,\widehat{\zeta_i},\dots,\widehat{\zeta_j},\dots,\zeta_k).
\end{split}
\end{equation*}
The formal adjoint of $d^{\nabla^E}$ w.r.t.\ $\langle \langle \cdot , \cdot \rangle \rangle_{h_E}$ will be denoted here by $\delta^{\nabla^E}.$ For the trivial bundle $E=M \times \mathbb{R},$ $d^{\nabla^E}$ becomes the exterior derivative $d.$ Its adjoint w.r.t.\ to $\langle \langle \cdot, \cdot \rangle \rangle$ will be denoted by $\delta.$

\newpage 

\subsection{Degree $k$ geometric structures}
The simplest differential geometric structure on a manifold are solutions to homogeneous first order linear PDEs.

\begin{definition}\label{D1}
A geometric structure on $M$ of degree $k$ (or a $k$-geometric structure, for short) is a form $\Phi \in \Omega^k(M,V)$ such that $d^{\nabla^V} \Phi=0,$ where $V \to M$ is any real vector bundle, and $\nabla^V$ is any linear connection on $V.$ The $k$-geometric structures on $M$ of a specific kind a priori tend to belong to some subset $U \subseteq \Omega^k(M,E),$ hence are collectively describable as $U \cap \ker{d^{\nabla^V}}.$ 
\end{definition}
This is illustrated in the example below.

\begin{example}{(A non-exhaustive list of $k$-geometric structures)}\label{ex1}
\quad 

\begin{enumerate}
\item \underline{Symplectic forms}. Let \[U=\{\omega \in \Omega^2(M) \mid \omega \mbox{ is non-degenerate }\}.\] The symplectic structures on $M$ are $2$-geometric structures describable as \[U \cap \ker{d}.\] 

\item \underline{K{\"a}hler complex structures}. Let $(M,g)$ be a Riemannian manifold with Levi-Civita connection $\nabla^{LC},$ \[AC(M):=\{J \in \Omega^1 (M, T_M) \mid J \circ J=-Id\}\] be the space of almost-complex structures on $M,$ and \[AC(M)_g:=\{J \in AC(M) \mid g(J\zeta,J\eta)=g(\zeta,\eta), \forall \zeta, \eta \in \mathfrak{X}(M)\}\] be the subspace of $g$-orthogonal almost-complex structures. By viewing \[AC(M) \subset \Omega^0 (M,T^*_M \otimes T_M),\] $d^{\nabla^{LC}} J=\nabla^{LC} J,$ for any $J \in AC(M).$  An almost-complex structure $J \in AC(M)_g$ is K{\"a}hler iff $d^{\nabla^{LC}} J=0.$ So these are $0$-geometric structures. Let \[U=AC(M)_g \subset \Omega^0 (M,T^*_M \otimes T_M)\] to find that \[U \cap \ker{d^{\nabla^{LC}}}\] describes the K{\"a}hler complex structures on $(M,g).$

\item \underline{Special complex structures}. Let \[C(M) \subset AC(M) \subset \Omega^1(M,T_M)\] be the space of complex structures on $M.$ Consider the family of those $J \in C(M)$ such that $d^{\nabla} J=0$ for a fixed choice of torsion-free connection $\nabla$ on $T_M.$ Similar complex structures arise in the definition of \emph{special K{\"a}hler structures} (see \cite{Free}). But they do not always exist; e.g.\ when $\dim_{\mathbb{R}} M\geq 4,$ $g$ has non-zero constant sectional curvature, and the chosen connection is the Levi-Civita one (see Theorem 1 in \cite{ACOTI}). These families of $d^{\nabla}$-closed complex structures are $1$-geometric structures, and can be described as \[U \cap \ker{d^{\nabla}},\] where \[U=C(M) \subset \Omega^1(M,T_M).\]
\end{enumerate}
\end{example}

\section{Variational nature}
This section demonstrates how geometric structures correspond to functionals on spaces of vector bundle valued forms.

\begin{definition}\label{d2}
Let $p_k:\Omega^{\bullet} (M, E) \to \Omega^k (M, E)$ be the projection mapping $p_k (\gamma)=\gamma_k.$ A variational object of degree $k$ (or $k$-variational object) is the $kth$ projection $p_k(c)$ of a critical point $c$ of some functional $\mathcal{F}$ with $dom(\mathcal{F}) \subseteq \Omega^{\bullet} (M, V),$ where $V \to M$ is a real vector bundle. 
\end{definition}

Denote the restriction $d^{\nabla^E}\Big|_{\Omega^k (M, E)}$ by $d_k^{\nabla^E}:\Omega^k (M, E) \to \Omega^{k+1} (M, E),$ and likewise let $\delta^{\nabla^E}\Big|_{\Omega^k (M, E)}$ be denoted by $\delta_k^{\nabla^E}:\Omega^k (M, E) \to \Omega^{k-1} (M, E).$ For any $k,$ define an operator on $\Omega^{\bullet} (M, E),$ \[d^{\nabla^E}[k]:=\sum_{i=0}^{n-1}(1-\delta_{k-1}^i)d_i^{\nabla^E}\] so that 
\[d^{\nabla^E}[k]\Big|_{\Omega^i (M, E)}= \begin{cases} 
      d_i^{\nabla^E}, & i \neq k-1 \\
      0 , & i = k-1.
   \end{cases}
\]

This has a formal adjoint in the setup laid out in the previous section, $\delta^{\nabla^E}[k],$ where  
\[\delta^{\nabla^E}[k]\Big|_{\Omega^i (M, E)}= \begin{cases} 
      \delta_i^{\nabla^E}, & i \neq k \\
      0 , & i = k.
   \end{cases}
\]
If $E$ is the trivial line bundle on $M,$ then $d^{\nabla^E}[k]$ indeed specializes to \[d[k]=\sum_{i=0}^{n-1}(1-\delta_{k-1}^i)d_i:\Omega^{\bullet} (M)\ \to \Omega^{\bullet} (M)\] with adjoint $\delta[k].$  

\newpage

\begin{theorem}\label{T1}
Geometric structures of degree $k$ are $k$-variational objects.
\end{theorem}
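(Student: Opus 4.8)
The plan is to attach to each degree $k$ a single quadratic functional on $\Omega^{\bullet}(M,V)$ whose critical points automatically have $d^{\nabla^V}$-closed degree-$k$ part, and then to realize a given geometric structure as the $k$th projection of such a critical point. Concretely, for a bundle $V \to M$ with metric $h_V$ and connection $\nabla^V$, I would set
\[
\mathcal{F}(\gamma) = \langle\langle d^{\nabla^V}[k]\gamma,\ d^{\nabla^V}[k]\gamma\rangle\rangle_{h_V} + \langle\langle \delta^{\nabla^V}[k]\gamma,\ \delta^{\nabla^V}[k]\gamma\rangle\rangle_{h_V},
\]
a ``partial Hodge energy'' built from exactly the two modified operators introduced above. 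The reason for using $d^{\nabla^V}[k]$ and $\delta^{\nabla^V}[k]$ in place of the full $d^{\nabla^V},\delta^{\nabla^V}$ is that, by construction, at degree $k$ the first operator still records the closure condition $d^{\nabla^V}\gamma_k=0$ while the second imposes \emph{no} co-closure condition on $\gamma_k$; this asymmetry is precisely what singles out geometric structures, which are closed but need not be co-closed.

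First I would record the adjointness bookkeeping: tracking degree shifts, the formal adjoint of $d^{\nabla^V}[k]$ with respect to $\langle\langle\cdot,\cdot\rangle\rangle_{h_V}$ is $\delta^{\nabla^V}[k]$, because $d_i^{\nabla^V}\colon\Omega^i(M,V)\to\Omega^{i+1}(M,V)$ has adjoint $\delta_{i+1}^{\nabla^V}$, so zeroing out the $i=k-1$ slot of $d^{\nabla^V}[k]$ corresponds to zeroing out the $i=k$ slot of its adjoint. Computing the first variation then yields
\[
\tfrac{1}{2}\,\delta\mathcal{F}(\gamma)(\dot\gamma) = \langle\langle \delta^{\nabla^V}[k]\,d^{\nabla^V}[k]\gamma + d^{\nabla^V}[k]\,\delta^{\nabla^V}[k]\gamma,\ \dot\gamma\rangle\rangle_{h_V},
\]
so the Euler--Lagrange equation is the vanishing of a modified Laplacian. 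Pairing that equation with $\gamma$ and integrating by parts collapses it, since $\mathcal{F}$ is a sum of squares, to the pair $d^{\nabla^V}[k]\gamma=0$ and $\delta^{\nabla^V}[k]\gamma=0$. As the outputs $d_i^{\nabla^V}\gamma_i$ and $\delta_i^{\nabla^V}\gamma_i$ live in mutually distinct degrees, these conditions split componentwise, and at degree $k$ they read exactly $d^{\nabla^V}\gamma_k=0$ with no constraint coming from $\delta$. Hence $p_k(c)$ is always a $k$-geometric structure, which shows the functional is not ad hoc.

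For the direction the theorem actually asserts, given any $k$-geometric structure $\Phi \in U\cap\ker d^{\nabla^V}$ I would take $c=\Phi$, viewed as a pure degree-$k$ element of $\Omega^{\bullet}(M,V)$. Since $k\neq k-1$, the operator $d^{\nabla^V}[k]$ acts on the degree-$k$ slot as $d^{\nabla^V}$, so $d^{\nabla^V}[k]c = d^{\nabla^V}\Phi = 0$; and $\delta^{\nabla^V}[k]$ annihilates the degree-$k$ slot by definition, so $\delta^{\nabla^V}[k]c = 0$ automatically. Both terms of the first variation then vanish identically in $\dot\gamma$, so $c$ is a critical point of $\mathcal{F}$ with $p_k(c)=\Phi$, which is exactly what it means for $\Phi$ to be a $k$-variational object.

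The main obstacle I anticipate is analytic rather than algebraic: the whole argument rests on the formal adjoint identity $\langle\langle d^{\nabla^V}\alpha,\beta\rangle\rangle_{h_V} = \langle\langle\alpha,\delta^{\nabla^V}\beta\rangle\rangle_{h_V}$ holding with no boundary contribution, and on $\mathcal{F}$ being well defined, which is clean when $M$ is compact but requires care (compact support or decay hypotheses, and a suitable Sobolev completion of $\Omega^{\bullet}(M,V)$ on which $\mathcal{F}$ and its first variation make sense) in the non-compact case. The one genuinely delicate verification is the degree-shift bookkeeping establishing $(d^{\nabla^V}[k])^{*}=\delta^{\nabla^V}[k]$, on which the collapse of the Euler--Lagrange equation to the two first-order conditions entirely depends.
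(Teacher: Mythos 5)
Your argument is correct for the statement as written, but it takes a genuinely different route from the paper. The paper uses the \emph{indefinite} quadratic functional $\mathcal{F}^{\nabla^E}(\gamma)=\langle\langle d^{\nabla^E}[k]\gamma,\gamma\rangle\rangle_{h_E}$, whose first variation is the first-order (Dirac-type) equation $(d^{\nabla^E}[k]+\delta^{\nabla^E}[k])\gamma=0$; to make the level-$(k\pm1)$ components of that equation collapse to exactly $d^{\nabla^E}\gamma_k=0$, it must restrict the domain to $\widetilde{\Omega}^k(M,E)=\{\gamma\mid \delta^{\nabla^E}\gamma_{k+2}=0,\ \gamma_{k-2}=0\}$, and it further restricts to $\widetilde{U}=\{\gamma\mid\gamma_k\in U\}$ so as to obtain the exact set equality $p_k(\mathcal{S}\cap\widetilde{U})=U\cap\ker d^{\nabla^E}$, which is the aim it announces. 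Your sum-of-squares energy $\|d^{\nabla^V}[k]\gamma\|^2+\|\delta^{\nabla^V}[k]\gamma\|^2$ avoids the auxiliary conditions on $\gamma_{k\pm2}$ entirely, because positivity forces $d^{\nabla^V}[k]\gamma=0$ and $\delta^{\nabla^V}[k]\gamma=0$ separately and there is no cross-term $d^{\nabla^V}\gamma_k+\delta^{\nabla^V}\gamma_{k+2}=0$ to neutralize; this is cleaner, and your adjointness bookkeeping $(d^{\nabla^V}[k])^{*}=\delta^{\nabla^V}[k]$ agrees with the paper's. The trade-offs are two. First, your Euler--Lagrange equation is second order, so the associated gradient flow would be a heat-type flow rather than the first-order flow $\partial_t\gamma=-(d^{\nabla^E}[k]+\delta^{\nabla^E}[k])\gamma$ that the paper extracts in Corollary \ref{c2} and Table \ref{table2}. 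Second, you never restrict to $\{\gamma\mid\gamma_k\in U\}$, so your critical set projects onto all of $\ker d_k^{\nabla^V}$ rather than onto $U\cap\ker d^{\nabla^V}$; this suffices for the literal claim that each geometric structure is a $k$-variational object (Definition \ref{d2} only asks that it be $p_k$ of \emph{some} critical point of \emph{some} functional), but to recover the sharper characterization the paper proves you would need to restrict your functional to $\widetilde{U}$ as well, at which point the same caveat the paper silently incurs (criticality of a restricted functional only requires vanishing of the first variation along admissible directions) applies to you too. Your closing remarks about compactness or decay hypotheses for the integration by parts are apt and in fact flag an assumption the paper itself leaves implicit.
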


\begin{proof}
Suppose that the geometric structures of interst are given as $U \cap \ker{d^{\nabla^E}}$ for $E \to M$ a vector bundle with connection $\nabla^E,$ and $U \subset \Omega^k(M,E).$ The aim is to show that there is a  functional $L$ with domain contained in $\Omega^{\bullet}(M,E)$ such that if $S$ is its set of critical points, then $U \cap \ker{d^{\nabla^E}}=p_k(S)$ (cf.\ Definitions \ref{D1} and \ref{d2}). 

Consider the extensions \[\widetilde{\Omega}^k (M,E):=\{\gamma \in \Omega^{\bullet}(M,E) \mid \delta^{\nabla^E} \gamma_{k+2}=0, \mbox{ and } \gamma_{k-2}=0\},\] and 
\[\widetilde{U}:=\{\gamma \in \widetilde{\Omega}^k (M,E) \mid \gamma_k \in U\}\] of $\Omega^k(M,E),$ and $U$ across $\Omega^{\bullet}(M,E).$ 

Let $h_E$ be a metic on $E,$ and define a functional \[\mathcal{F}^{\nabla^E}:\widetilde{\Omega}^k (M,E) \to \mathbb{R}, \quad \mathcal{F}^{\nabla^E}:=\langle\langle d^{\nabla^E}[k] \gamma,\gamma\rangle\rangle_{h_E}.\] The first variation is \[\frac{d}{dt}\Big|_{t=0} \mathcal{F}^{\nabla^E}(\gamma+t\beta)=\langle\langle (d^{\nabla^E}[k]+\delta^{\nabla^E}[k]) \gamma,\beta \rangle\rangle_{h_E},\] so the critical point set is 

\begin{equation*}
\begin{split}
\mathcal{S}&=\{\gamma \in \widetilde{\Omega}^k (M,E) \mid d^{\nabla^E}[k]\gamma_{l-1}+\delta^{\nabla^E}[k] \gamma_{l+1}=0, \forall 0 \leq l \leq n\}\\
&=\{\gamma \in \widetilde{\Omega}^k (M,E) \mid d^{\nabla^E}[k]\gamma_{l-1}+\delta^{\nabla^E}[k] \gamma_{l+1}=0, \forall 0 \leq l \leq n, l\neq k-1,k+1,\\
&d^{\nabla^E}[k]\gamma_{k-2}+\delta^{\nabla^E}[k] \gamma_{k}=0, \mbox{ and } d^{\nabla^E}[k]\gamma_k+\delta^{\nabla^E}[k] \gamma_{k+2}=0\}\\
&=\{\gamma \in \widetilde{\Omega}^k (M,E) \mid d^{\nabla^E}[k]\gamma_{l-1}+\delta^{\nabla^E}[k] \gamma_{l+1}=0, \forall 0 \leq l \leq n, l\neq k+1,\\
&d^{\nabla^E}[k]\gamma_k=0\}.
\end{split}
\end{equation*}
Note that by design, $d^{\nabla^E}[k],$ and $\delta^{\nabla^E}[k]$ eliminate the codifferential condition on $\gamma_k$ by making the equation $d^{\nabla^E}[k]\gamma_{k-2}+\delta^{\nabla^E}[k] \gamma_{k}=0$ redundant.

The sought after functional $L$ is $\mathcal{F}^{\nabla^E}\Big|_{\widetilde{U}},$ and it has critical point set $\mathcal{S} \cap \widetilde{U}.$ The proof is complete since \[p_k(\mathcal{S} \cap \widetilde{U})=\{\alpha \in U \mid d^{\nabla^E} \alpha=0\}=U \cap \ker{d^{\nabla^E}}.\]
\end{proof}

It follows at once that
\begin{corollary}\label{c1}
Symplectic forms, K{\"a}hler complex structures, and special complex structures (cf.\ Example \ref{ex1}), to name a few, are $k$-variational objects. The key information is summarized in Table \ref{tab1}.
\end{corollary}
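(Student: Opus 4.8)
The plan is to deduce the statement directly from Theorem \ref{T1} by instantiating it at each of the three families catalogued in Example \ref{ex1}; no new analysis is required, only the bookkeeping of the relevant data $(k, E, \nabla^E, U)$ and the construction of the associated functional. In effect the corollary is a dictionary between Example \ref{ex1} and Theorem \ref{T1}, and Table \ref{tab1} is meant to record that dictionary.

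First I would record, for each case, the degree $k$, the bundle $E \to M$, the connection $\nabla^E$, and the constraint set $U$ that realize the structure as $U \cap \ker{d^{\nabla^E}}$, exactly as extracted in Example \ref{ex1}. For symplectic forms this is $k=2$, with $E = M \times \mathbb{R}$ the trivial line bundle, $\nabla^E$ the trivial connection so that $d^{\nabla^E} = d$, and $U = \{\omega \in \Omega^2(M) \mid \omega \text{ non-degenerate}\}$. For K{\"a}hler complex structures it is $k=0$, with $E = T^*_M \otimes T_M$, $\nabla^E = \nabla^{LC}$, and $U = AC(M)_g \subset \Omega^0(M, T^*_M \otimes T_M)$. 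For special complex structures it is $k=1$, with $E = T_M$, $\nabla^E = \nabla$ a fixed torsion-free connection, and $U = C(M) \subset \Omega^1(M, T_M)$. These entries, together with the resulting functional, are what I would collect into Table \ref{tab1}.

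Next, for each row I would supply the metric $h_E$ needed to run Theorem \ref{T1}: on the trivial line bundle this is the canonical fibre metric, while on $T_M$, on $T^*_M \otimes T_M$, and on any bundle built from $T_M, T^*_M$ the metric is the one induced by $g$, as noted after the construction of $\langle \langle \cdot, \cdot \rangle \rangle_g$. With $h_E$ in hand, the extensions $\widetilde{\Omega}^k(M,E)$ and $\widetilde{U}$, the operator $d^{\nabla^E}[k]$, and the functional $\mathcal{F}^{\nabla^E} = \langle \langle d^{\nabla^E}[k]\gamma, \gamma \rangle \rangle_{h_E}$ are defined verbatim as in the proof of Theorem \ref{T1}. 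Restricting $\mathcal{F}^{\nabla^E}$ to $\widetilde{U}$ yields a functional whose critical set $\mathcal{S} \cap \widetilde{U}$ satisfies $p_k(\mathcal{S} \cap \widetilde{U}) = U \cap \ker{d^{\nabla^E}}$, so by Definition \ref{d2} every member of each family appears as the degree-$k$ projection of a critical point, i.e.\ as a $k$-variational object, which is the assertion.

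The deduction is genuinely immediate, so the only place asking for any care is the first step: one must confirm that each $U$ sits inside the stated $\Omega^k(M,E)$ and that the vanishing of the corresponding exterior covariant derivative is precisely the defining closedness of the structure (for instance that $d^{\nabla^{LC}} J = \nabla^{LC} J$ detects the K{\"a}hler condition in the degree-$0$ case, rather than some weaker identity). Since these identifications are already carried out in Example \ref{ex1}, the hardest part is merely transcribing the entries of Table \ref{tab1} consistently across the three columns of data; all of the substantive work lives in Theorem \ref{T1} itself.
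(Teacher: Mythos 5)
Your proposal is correct and matches the paper exactly: the paper offers no separate argument for Corollary \ref{c1}, treating it as an immediate instantiation of Theorem \ref{T1} at the three quadruples $(k,E,\nabla^E,U)$ from Example \ref{ex1}, which is precisely your dictionary, and your table entries agree with Table \ref{tab1}. Nothing further is needed.
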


\begin{landscape}
\begin{center}
\begin{table}
\centering
  \begin{tabular}{ l | c | c | c | c | r }
    \hline
    geometric structure & degree $k$ & vector bundle $E$ & connection $\nabla^E$ & $U \subseteq \Omega^k(M,E)$ & functional $\mathcal{F}^{\nabla^E}$ \\ \hline
    symplectic forms & 2 & $M \times \mathbb{R}$ & & $\omega$ non-degenerate & $\mathcal{S}(\gamma)=\langle \langle d[2] \gamma,\gamma\rangle \rangle$  \\ \hline
    K{\"a}her complex structures & 0 & $T^*_M \otimes T_M$ & $\nabla^{LC}$ & $AC(M)_g$ &  $\mathcal{K}(\gamma)=\langle \langle d^{\nabla^{LC}}[0] \gamma,\gamma\rangle \rangle_g$  \\ \hline
    special complex structures & 1 & $T_M$ & torsion-free $\nabla$ & $C(M)$ &  $\mathcal{C}^{\nabla}(\gamma)=\langle \langle d^{\nabla}[1] \gamma,\gamma\rangle \rangle_g$  \\ \hline
  \end{tabular}
  \caption{Functional counterpart of Example \ref{ex1}}\label{tab1}
  \end{table}
\end{center} 

\begin{center}
\begin{table}
\Large
\centering
  \begin{tabular}{ c | c }
    \hline
    geometric structure & flow \\ \hline
    symplectic forms & $\frac{\partial \gamma}{\partial t}=-(d[2]+\delta[2])\gamma$ \\ \hline
    K{\"a}her complex structures & $\frac{\partial \gamma}{\partial t}=-(d^{\nabla^{LC}}[0]+\delta^{\nabla^{LC}}[0])\gamma$  \\ \hline
    special complex structures & $\frac{\partial \gamma}{\partial t}=-(d^{\nabla}[1]+\delta^{\nabla}[1])\gamma$ \\ \hline
  \end{tabular}
  \caption{Formal flows}\label{table2}
  \end{table}
\end{center}
\end{landscape}

It is also immediate that

\begin{corollary}\label{c2}
The formal gradient flow of the functional $\mathcal{F}^{\nabla^E}=\langle \langle d^{\nabla^E}[k] \gamma,\gamma\rangle \rangle_{h_E}$ from the above proof is \[\frac{\partial \gamma}{\partial t}=-(d^{\nabla^E}[k]+\delta^{\nabla^E}[k])\gamma.\] 
\end{corollary}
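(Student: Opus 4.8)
The plan is to obtain the flow by reading its right-hand side directly off the first variation already computed in the proof of Theorem \ref{T1}. Recall that the formal $L^2$-gradient of a functional $\mathcal{F}$ at a point $\gamma$, taken with respect to $\langle\langle\cdot,\cdot\rangle\rangle_{h_E}$, is the unique field $\operatorname{grad}\mathcal{F}(\gamma)$ representing the differential, i.e.
\[
\frac{d}{dt}\Big|_{t=0}\mathcal{F}(\gamma+t\beta)=\langle\langle \operatorname{grad}\mathcal{F}(\gamma),\beta\rangle\rangle_{h_E}\quad\text{for all admissible }\beta,
\]
and the formal gradient flow is by definition $\partial_t\gamma=-\operatorname{grad}\mathcal{F}(\gamma)$. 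So the corollary amounts to identifying the gradient of $\mathcal{F}^{\nabla^E}$.

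First I would invoke the first-variation formula established in the proof of Theorem \ref{T1}, namely
\[
\frac{d}{dt}\Big|_{t=0}\mathcal{F}^{\nabla^E}(\gamma+t\beta)=\langle\langle (d^{\nabla^E}[k]+\delta^{\nabla^E}[k])\gamma,\beta\rangle\rangle_{h_E}.
\]
This is where the quadratic shape of $\mathcal{F}^{\nabla^E}$, the symmetry of $\langle\langle\cdot,\cdot\rangle\rangle_{h_E}$, and the fact that $\delta^{\nabla^E}[k]$ is the formal adjoint of $d^{\nabla^E}[k]$ are used to symmetrize the two cross terms into a single pairing against $\beta$. Matching this against the defining property above identifies $\operatorname{grad}\mathcal{F}^{\nabla^E}(\gamma)=(d^{\nabla^E}[k]+\delta^{\nabla^E}[k])\gamma$, and negating yields the asserted flow. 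This is exactly what makes the corollary immediate.

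The one point I expect to require care is whether this expression is a genuine tangent vector to the domain rather than merely the ambient representative. Because $\mathcal{F}^{\nabla^E}$ lives on the linear subspace $\widetilde{\Omega}^k(M,E)$, the admissible variations $\beta$ obey $\delta^{\nabla^E}\beta_{k+2}=0$ and $\beta_{k-2}=0$, so the gradient is a priori pinned down only modulo the $L^2$-orthogonal complement of that subspace; strictly, the honest gradient is the orthogonal projection of $(d^{\nabla^E}[k]+\delta^{\nabla^E}[k])\gamma$ onto $\widetilde{\Omega}^k(M,E)$. The qualifier \emph{formal} in the statement is precisely the license to drop this projection and write the ambient operator. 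A fully rigorous account would close the gap by verifying that $d^{\nabla^E}[k]+\delta^{\nabla^E}[k]$ already maps $\widetilde{\Omega}^k(M,E)$ into itself, so that the projection is unnecessary and the flow preserves the domain.
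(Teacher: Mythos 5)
Your proposal is correct and matches the paper's (implicit) argument exactly: the paper treats this as immediate because the first variation computed in the proof of Theorem \ref{T1} already exhibits $(d^{\nabla^E}[k]+\delta^{\nabla^E}[k])\gamma$ as the $L^2$-gradient, and the flow is its negative. Your additional remark about projecting onto the tangent space of $\widetilde{\Omega}^k(M,E)$ is a sensible refinement the paper does not address, but it does not change the substance.
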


Table \ref{table2} contains some flow examples. 

The combinatorial functional analysis that lurks beneath this bundle form calculus of variations is an interesting topic in its own right. A deeper understanding of it could lead to improvements of Theorem \ref{T1}. 

The following example is anomalous, but perhaps still worth recording. For a Riemannian metric $g$ on $M,$ and an almost-complex structure $J,$ put $\Omega_J(\cdot,\cdot)=g(J\cdot ,\cdot).$ Recall that the \emph{almost-K{\"a}hler structures on} $(M,g)$ can be implicitly defined as \[\{J \in AC(M)_g \mid d\Omega_J=0\},\] while the \emph{K{\"a}hler structures on} $(M,g)$ are \[\{J \in AC(M)_g \cap C(M) \mid d\Omega_J=0\}.\] See Example \ref{ex1} for the terminology in use. This is a misfit case since neither of these sets is expressible as $U \cap \ker{d}$ since $AC(M)_g \not\subset \Omega^{\bullet} (M).$ Nevertheless, one can describe these structures via functionals. To that end, for any $\gamma \in \widetilde{\Omega}^2 (M),$ and any $J \in AC(M)_g,$ set
\begin{equation*}
\begin{split}
\gamma_J&:=\gamma_1(J\cdot)+\gamma_2(J\cdot, \cdot)+\dots+\gamma_n (J\cdot,\dots,\cdot)\\
&=: (\gamma_J)_1+(\gamma_J)_2+\dots+(\gamma_J)_n.
\end{split}
\end{equation*}
Notice that $\gamma_{J+tV}=\gamma_J+t\gamma_V.$ Now, pick a $\gamma$ that produces non-tautological extensions $\gamma_J$ of $\Omega_J=(\gamma_J)_2,$ $J\in AC(M)_g.$ Let \[\mathcal{F}_g^{\gamma}:AC(M)_g \to \mathbb{R}, \quad \mathcal{F}_g^{\gamma}(J)=\langle \langle d[2] \gamma_J,\gamma_J\rangle \rangle.\] The set of critical points of this functional is \[S^{\gamma}=\{J\in AC(M)_g \mid d[2] (\gamma_J)_{k-1}+\delta[2](\gamma_J)_{k+1}=0, \forall k \neq 3, \mbox{and } d\Omega_J=0\}.\] and that of $\mathcal{F}_g^{\gamma}\big|_{AC(M)_g \cap C(M)}$ is $\mathcal{S}^{\gamma} \cap C(M).$ The (almost-)K{\"a}hler structures can now be seen to be represented by $S^{\gamma},$ respectively $\mathcal{S}^{\gamma} \cap C(M).$ The associated formal flow is 
\[\frac{\partial \gamma_J}{\partial t}=-(d[2]+\delta[2])\gamma_J.\]

\newpage

\section*{Conclusion}

Some natural questions to ask is are if $k$-geometric structures form a category, and if the assignment $(E,h_E,\nabla^E,U)\mapsto \mathcal{F}^{\nabla^E}\big|_{\widetilde{U}},$ or some other map encoding the association furnished by Theorem \ref{T1}, can be viewed as a functor. Below is a discussion pertaining only to the first question. Riemannian geometry seems to provide a sample pair of objects, and a morphism between them. Speculations on the category of $k$-geometric structures on $M,$ called here $GS_k(M),$ stem from this example. 

Let $g, g'$ be Riemannian metrics on $M$ with corresponding Levi-Civita connection $\nabla^{LC},$ $\nabla^{LC'}.$ Suppose that $U \cap \ker{d^{\nabla^{LC}}},$ $U' \cap \ker{d^{\nabla^{LC'}}}$ describe 2 different kinds of $k$-geometric structures on $M.$ Let $f:(M,g) \to (M,g')$ be an isometry such that $(Id_{\Omega^k(M)} \otimes f_*)(U)\subset U',$ where $f_*:T_M \to T_M$ is the pushforward of $f.$ Regarding $(T_M, g, \nabla^{LC},U),$ $(T_M, g', \nabla^{LC'},U')$ as potential objects of $GS_k(M),$ $f_* \in \Hom_{GS_k(M)}\big((T_M, g, \nabla^{LC},U),$ $(T_M, g', \nabla^{LC'},U')\big)$ seems to be a morphism that satisfies:

\begin{enumerate}
\item (naturality of the connection) $f_* (\nabla^{LC}_{\zeta} \eta)=\nabla^{LC'}_{f_{*} \zeta} (f_{*} \eta),$

\item (isometry condition) 
\begin{equation*}
\begin{split}
g(\zeta,\eta)&=(f^* g')(\zeta,\eta)\\
&=g'(f_{*} \zeta, f_{*} \eta)\\
&=\big((f^{\vee}_{*} \otimes f^{\vee}_{*})\circ g'\big)(\zeta, \eta),
\end{split}
\end{equation*} 
where $f^{\vee}_{*}:T^*_M \to T^*_M$ is the dual homomorphism, and
\item (assumption) $(Id_{\Omega^k(M)} \otimes f_*)(U)\subset U'.$
\end{enumerate} 

So could $k$-geometric structures on $M$ form a category with objects and morphisms akin to

\begin{equation*}
\begin{split}
ob GS_k(M)=&\mbox{ quadruples } (E,h_E,\nabla^E,U), \mbox{ where }E \to M \mbox{ is a real vector bundle with metric }h_E\\
& \mbox{ and connection }\nabla^E, \mbox{ and } U \subseteq \Omega^k(M,E) \mbox{ a subset}
\end{split}
\end{equation*}

\begin{equation*}
\begin{split}
mor GS_k(M)=&\Hom_{GS_k(M)}\big((E,h_E,\nabla^E,U), (F,h_F,\nabla^F,V)\big) \mbox{ over all pairs }\\
&(E,h_E,\nabla^E,U), (F,h_F,\nabla^F,V) \in ob GS_k(M),
\end{split}
\end{equation*}
where \[\Hom_{GS_k(M)}\big((E,h_E,\nabla^E,U), (F,h_F,\nabla^F,V)\big)\] consists of vector bundle homomorphisms $\sigma:E \to F$ such that
\begin{enumerate}
\item $\forall s \in \Omega^0(M,E),$ $\forall \zeta \in \mathfrak{X}(M),$ $\sigma(\nabla^E_{\zeta} s)=\nabla^F_{\zeta} (\sigma \circ s);$

\item $(\sigma^{\vee} \otimes \sigma^{\vee}) \circ h_F=h_E,$ where $\sigma^{\vee}$ is the dual of $\sigma;$ and 
\item $(Id_{\Omega^k(M)} \otimes \sigma)(U)\subset V$ \quad ?
\end{enumerate}

\noindent
Gabriella Clemente

\noindent
e-mail: clemente6171@gmail.com
\end{document}